\newcommand{\excise}[1]{}
\newtheorem{thm}{Theorem}[section]
\newtheorem{lemma}[thm]{Lemma}
\newtheorem{prop}[thm]{Proposition}
\newtheorem{conj}[thm]{Conjecture}
\theoremstyle{definition}
\numberwithin{equation}{section}
\newcommand{\ring}[1]{\ensuremath{\mathbb{#1}}}
\renewcommand\>{\rangle}
\newcommand\<{\langle}
\newcommand\ZZ{\ring{Z}}
\newcommand\kk{\Bbbk}
\DeclareMathOperator\Hom{Hom} 
\DeclareMathOperator\lcm{lcm} 
\newcommand{\frob}{\mathsf{F}}
\begin{document}

\mbox{}
\title[Families of numerical semigroups and the Huneke-Wiegand conjecture]{Families of numerical semigroups and a special case of the Huneke-Wiegand conjecture}

\author[Landeros]{Miguel Landeros}
\address{Mathematics Department \\ University of Iowa }
\email{mlanderos314@gmail.com}

\author[O'Neill]{Christopher O'Neill}
\address{Mathematics Department\\San Diego State University\\San Diego, CA 92182}
\email{cdoneill@sdsu.edu}

\author[Pelayo]{Roberto Pelayo}
\address{Mathematics Department\\University of California, Irvine\\Irvine, CA 92697}
\email{rcpelayo@uci.edu}

\author[Pe\~{n}a]{Karina Pe\~{n}a}

\author[Ren]{James Ren}
\address{Mathematics \& Computer Science Department \\ University of Guam \\ Mangilao, Guam 96913}
\email{renj@triton.uog.edu}

\author[Wissman]{Brian Wissman}
\address{Mathematics Department\\University of Hawai`i at Hilo\\Hilo, HI 96720}
\email{wissman@hawaii.edu}

\subjclass[2010]{Primary: 20M14, 05E40.}

\keywords{numerical semigroup; Huneke-Wiegand conjecture}

\date{\today}

\begin{abstract}
The Huneke-Wiegand conjecture is a decades-long open question in commutative algebra.  Garc\'ia-S\'anchez and Leamer showed that a special case of this conjecture concerning numerical semigroup rings $\kk[\Gamma]$ can be answered in the affirmative by locating certain arithmetic sequences within the numerical semigroup~$\Gamma$.  In this paper, we use their approach to prove the Huneke-Wiegand conjecture in the case where $\Gamma$ is generated by a generalized arithmetic sequence and showcase how visualizations can be leveraged to find the requisite arithmetic sequences.  
\end{abstract}

\maketitle


\section{Introduction}
\label{sec:intro}

Numerical semigroups, co-finite additive subsemigroups of the natural numbers, have long been studied for their relationship to important objects in commutative algebra.  Given a numerical semigroup $\Gamma \subseteq \ZZ_{\ge 0}$ generated by $n_1, \ldots, n_k$, which we denote
$$\Gamma = \langle n_1, n_2, \ldots, n_k\rangle = \{z_1n_1 + z_2n_2 + \cdots + z_k n_k : z_i \in \ZZ_{\ge 0}\},$$
the semigroup algebra $\kk[\Gamma] = \kk[x^{n_1}, \ldots, x^{n_k}]$ over a field $\kk$ is the subring of the polynomial ring $\kk[x]$ for which every term $x^n$ appearing with nonzero coefficient in an element of $\kk[\Gamma]$ has $n \in \Gamma$.  
Understanding monomial ideals in this ring, which is inherently a problem in commutative algebra, can be attacked by studying the underlying semigroup $\Gamma$.  The advantages of this approach are manifold, as numerical semigroups have a well-studied factorization theory~\cite{numericalsurvey,numerical} and several computational packages \cite{numericalsgpsgap,numericalsgpssage}. 

One specific open problem that has benefited explicitly from this relationship is the following special case of the Huneke-Wiegand conjecture~\cite{hwintersection,hwconj}.  

\begin{conj}\label{conj:hw}
If $R = \kk[\Gamma]$ is the semigroup algebra of a symmetric numerical semigroup $\Gamma$, and $M$ is a 2-generated ideal of $R$, viewed as a module over $\kk[\Gamma]$, then the torsion submodule of $M \otimes_{R} \Hom_R(M, R)$ is non-trivial.
\end{conj}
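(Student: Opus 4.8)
The plan is to reduce Conjecture~\ref{conj:hw} to a purely combinatorial statement about $\Gamma$ via the Garc\'ia-S\'anchez--Leamer correspondence, and then to extract the needed combinatorial data from the symmetry of $\Gamma$. First I would normalize the module. Since any $2$-generated ideal $M = (x^a, x^b)$ with $a < b$ is isomorphic as an $R$-module to $(1, x^n)$ for $n = b - a$ (multiply by $x^{-a}$ inside the field of fractions), and since the torsion of $M \otimes_R \Hom_R(M, R)$ is an isomorphism invariant, it suffices to treat $M = (1, x^n)$ for each $n \ge 1$. By the Garc\'ia-S\'anchez--Leamer reduction, the torsion submodule of $M \otimes_R \Hom_R(M, R)$ is nontrivial precisely when $\Gamma$ contains an arithmetic sequence $x, x+n, \dots, x + \ell n$ of common difference $n$ whose length exceeds an explicit threshold determined by $\Gamma$ and $n$ (equivalently, when the associated Leamer monoid has an atom of sufficiently large second coordinate). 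Thus the whole conjecture becomes the assertion that, for every $n$, the symmetric semigroup $\Gamma$ contains a step-$n$ arithmetic sequence of the required length.

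Second, I would put the symmetry hypothesis to work. Write $F$ for the Frobenius number of $\Gamma$; symmetry means that for every integer $z$ exactly one of $z \in \Gamma$, $F - z \in \Gamma$ holds, so the involution $\iota(z) = F - z$ interchanges $\Gamma$ with the (shifted) set of gaps. On the module side, the Gorenstein property gives $\omega_R \cong R$, so the canonical module contributes no extra combinatorics and $\Hom_R(M, R)$ is again a monomial fractional ideal $(R : M)$; this keeps the whole problem inside the element/gap structure of $\Gamma$. The key idea is that $\iota$ converts the search for a \emph{long} step-$n$ progression inside $\Gamma$ into a bound on the \emph{number of gaps} encountered along the progression $\dots, x - n, x, x + n, \dots$: because symmetry equates the count of elements below $F/2$ with the count of gaps above $F/2$, a progression forced to be short near $0$ must be compensated by a long run near $F$, which $\iota$ transports back. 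Making this compensation quantitative, and matching it against the Garc\'ia-S\'anchez--Leamer threshold, is the heart of the argument.

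Third, I would organize the construction inductively. The natural parameters are the number of generators $k$ and the multiplicity $m = \min(\Gamma \minus \{0\})$. For $k = 2$ every $\Gamma = \langle n_1, n_2 \rangle$ is symmetric and the required progressions can be written down explicitly, giving a base case; for the inductive step I would realize a general symmetric $\Gamma$ through gluings or numerical duplications that preserve symmetry and track how step-$n$ progressions pull back through these operations. The structural families for which this already succeeds---such as the generalized arithmetic sequences treated in this paper---serve as the reservoir of base cases, with the aim of approximating an arbitrary symmetric $\Gamma$ by such families closely enough to transfer the progressions.

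The step I expect to be the genuine obstacle is the quantitative symmetry estimate of the second paragraph: symmetry is a \emph{global} balance between elements and gaps, whereas the Garc\'ia-S\'anchez--Leamer threshold demands a \emph{local} progression of prescribed step $n$ and prescribed length. There is no general mechanism guaranteeing that a global count localizes to a single residue class modulo $n$, and a symmetric semigroup can distribute its elements very unevenly among these classes. Controlling this distribution uniformly over all $n$---rather than only for the highly regular families where the residue classes are forced to behave---is exactly the difficulty that has kept Conjecture~\ref{conj:hw} open, and it is where any complete proof along these lines must do essentially new work.
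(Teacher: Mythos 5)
There is a genuine gap here, and an unavoidable one: the statement you are proving is Conjecture~\ref{conj:hw}, which the paper does not prove and which (as the paper notes) has been open for three decades. The paper only establishes special cases: semigroups with at most $3$ generators (Proposition~\ref{p:3gensym}, already covered by the complete intersection case of~\cite{hwintersection}) and generalized arithmetic sequences (Proposition~\ref{p:genarith}, giving Theorem~\ref{t:hwgenarith}). Your own fourth paragraph concedes that the quantitative symmetry estimate---localizing the global element/gap balance of a symmetric semigroup to a single residue class modulo $s$---is not carried out, and that concession is accurate: this is precisely the difficulty that keeps the conjecture open, so what you have written is a research program, not a proof.

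Beyond the admitted gap, two steps of the program are flawed as stated. First, you mischaracterize the Garc\'ia-S\'anchez--Leamer reduction: their criterion, as used in this paper, is the existence of an \emph{irreducible} element $(x,2)$ in the Leamer monoid $S_\Gamma^s$, i.e.\ a three-term progression $x, x+s, x+2s$ in $\Gamma$ that is not a sum of other elements of $S_\Gamma^s$; it is a sufficient condition for nontrivial torsion, not the equivalence (``precisely when the length exceeds an explicit threshold'') you assert, and irreducibility is not monotone in any length parameter, so no threshold formulation is available---note in Proposition~\ref{p:genarith} the paper must argue reducibility fails, not that a progression is long enough. Second, the inductive step via gluings cannot exhaust the symmetric case: iterated gluings of copies of $\ZZ_{\ge 0}$ produce exactly the complete intersection semigroups, a proper subclass of the symmetric ones already settled in~\cite{hwintersection}, and symmetric non--complete-intersection semigroups (which occur from embedding dimension $4$ onward) are not reached by any known symmetry-preserving construction along which irreducibles of $S_\Gamma^s$ pull back. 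Likewise, ``approximating'' an arbitrary symmetric $\Gamma$ by generalized arithmetic sequences has no precise content, since irreducibility in $S_\Gamma^s$ is highly unstable under perturbing the generators. If you restrict your claim to the generalized arithmetic case, the paper's actual mechanism is quite different from your counting heuristic: it exhibits explicit candidates $(\frob(\Gamma)-s+d,2)$ and $(ah+d,2)$ and rules out reducibility by direct manipulation of factorizations (Lemma~\ref{l:genarith} and Proposition~\ref{p:genarith}).
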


In its general form, the Huneke-Wiegand conjecture~\cite{hwconj}, which has been open for 3 decades, concerns a one-dimensional Gorenstein domain $R$ and a finitely generated $R$-module $M$ that is not projective.  If $R = \kk[\Gamma]$ is a numerical semigroup algebra, then $R$ is one-dimensional, and the Gorenstein hypothesis on $R$ is equivalent to $\Gamma$ being \emph{symmetric} (that is, for every $n \not \in \Gamma$, we have $F(S) - n \in \Gamma$, where $F(\Gamma) = \max(\ZZ_{\ge 0} \setminus \Gamma)$ is the \emph{Frobenius number} of $\Gamma$).  

In~\cite{hwintersection}, Garc\'ia-S\'anchez and Leamer showed that Conjecture~\ref{conj:hw} can be positively answered for a given numerical semigroup algebra $R = \kk[\Gamma]$ if certain irreducible arithmetic sequences can be found inside $\Gamma$ itself.  
Fix a positive $s \notin \Gamma$ and let
$$S_\Gamma^s = \{(n,\ell) : n, \ell \in \ZZ_{\ge 1} \text{ with } n, n + s, \ldots, n + \ell s \in \Gamma\}$$
encode the arithmetic sequences of step size $s$ that are contained in $\Gamma$.  
Note that $S_\Gamma^s$ is closed under component-wise addition since $(n_1,\ell_1), (n_2,\ell_2) \in S_\Gamma^s$ implies
$$n_1 + n_2, n_1 + n_2 + s, \ldots, n_1 + n_2 + (\ell_1 + \ell_2)s \in \Gamma.$$
The authors of~\cite{hwintersection} proved that Conjecture~\ref{conj:hw} holds if, for any numerical semigroup~$\Gamma$ and any positive $s \notin \Gamma$, some element $(n, 2) \in S_\Gamma^s$ is \emph{irreducible}, meaning it cannot be written as a sum of other elements of $S_\Gamma^s$.  
This result has been leveraged to verify Conjecture~\ref{conj:hw} for some well-studied families of numerical semigroups, such as when $\Gamma$ is a complete intersection~\cite{hwintersection} or when $\Gamma$ is generated by an arithmetic sequence whose step size coincides with $s$~\cite{teambob}.  


In this paper, we utilize the results of~\cite{hwintersection} to prove Conjecture~\ref{conj:hw} whenever $\Gamma$ is generated by a \emph{generalized arithmetic sequence}, that is,
$$\Gamma = \<a, ah+d, ah+2d, \ldots, ah+kd\>$$
for some $a, h, d, k \in \ZZ_{\geq 1}$ with $\gcd(a,d) = 1$.  This family of numerical semigroups, introduced in~\cite{omidalirahmati}, are known for admitting concise characterizations of invariants that generally have high computational complexity in general (e.g., the Frobenius number).  

\begin{thm} \label{t:hwgenarith}
If $\Gamma$ is generated by a generalized arithmetic sequence, then the Huneke-Wiegand conjecture holds for any 2-generated monomial ideal in $\kk[\Gamma]$.
\end{thm}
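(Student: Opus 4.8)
The plan is to run the Garc\'ia-S\'anchez--Leamer reduction quoted above and turn the Huneke-Wiegand statement into a purely combinatorial claim about the monoids $S_\Gamma^s$. By~\cite{hwintersection} it suffices to prove that for every positive $s \notin \Gamma$ the monoid $S_\Gamma^s$ contains an irreducible element of the form $(n,2)$. The first thing I would record is that reducibility of $(n,2)$ is highly constrained: if $(n,2) = (n_1,\ell_1) + (n_2,\ell_2)$ with both summands in $S_\Gamma^s$, then $\ell_1 = \ell_2 = 1$, so $n = n_1 + n_2$ with $n_i, n_i + s \in \Gamma$. Thus, writing
$$A_s = \{m \in \Gamma : m + s \in \Gamma\}, \qquad B_s = \{m \in A_s : m + 2s \in \Gamma\},$$
the set $A_s$ is closed under addition and cofinite in $\ZZ_{\ge 0}$ (it contains every integer exceeding $F(\Gamma)$), and $(n,2) \in S_\Gamma^s$ is irreducible precisely when $n \in B_s$ and $n$ is not a sum of two elements of $A_s$. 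The goal therefore becomes: for each positive $s \notin \Gamma$, exhibit an element of $B_s$ that is not a sum of two elements of $A_s$.

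Next I would make $\Gamma$ explicit by computing its Apéry set with respect to $a$. Parametrizing an element by the number of \emph{sloped} generators $ah+id$ used and by the index sum $j = \sum_i i z_i$, one obtains that $N \in \Gamma$ if and only if $N \ge \phi(\bar{j})$, where $\bar{j} \in \{0,\dots,a-1\}$ is the residue determined by $d\,\bar{j} \equiv N \pmod a$ (using $\gcd(a,d)=1$) and
$$\phi(j) = jd + ah\lceil j/k\rceil.$$
Since $\phi$ is strictly increasing this yields $\Ap(\Gamma,a) = \{\phi(j) : 0 \le j \le a-1\}$ and a clean membership test to feed into the analysis of $A_s$ and $B_s$. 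The subadditivity $\phi(i+j) \le \phi(i)+\phi(j)$, whose defect is controlled by the single correction term $ah$ coming from $\lceil \cdot/k\rceil$, will be the main quantitative tool for ruling out splittings.

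With the membership test in hand, the easy half is immediate: the minimal element $\mu = \min A_s$ is never a sum of two elements of $A_s$, since anything positive and smaller than $\mu$ fails to lie in $A_s$. Hence whenever $\mu + 2s \in \Gamma$ we may take $n = \mu$ and conclude that $(\mu,2)$ is irreducible. The entire difficulty is concentrated in the complementary case $\mu + s \in \Gamma$ but $\mu + 2s \notin \Gamma$, where the shortest step-$s$ progression refuses to lengthen. In that case I would locate a \emph{second} irreducible element inside $B_s$ by an explicit construction, running a case analysis on the residue $\sigma = \bar{j}(s)$ and on the size of $s$ relative to the sloped generators: the formula for $\phi$ pinpoints the smallest element of each relevant residue class that both lies in $\Gamma$ and extends to a length-two progression, and the subadditivity-with-defect estimates for $\phi$ are then used to exclude every nontrivial decomposition $n = n_1 + n_2$ with $n_1, n_2 \in A_s$. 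I expect this last step to be the main obstacle: the interaction of the ceiling $\lceil j/k\rceil$ with the wraparound modulo $a$ forces several sub-cases, and it is precisely here that the visualizations advertised in the abstract are valuable for guessing the correct element before its irreducibility is verified by inequalities.
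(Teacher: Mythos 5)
Your setup is sound: the reduction via \cite{hwintersection} to finding an irreducible $(n,2) \in S_\Gamma^s$ for each positive $s \notin \Gamma$ is exactly the paper's starting point, your observation that reducibility forces a splitting $n = n_1 + n_2$ with $n_1, n_2 \in A_s$ is correct, your Ap\'ery set formula $\Ap(\Gamma,a) = \{jd + ah\lceil j/k\rceil : 0 \le j \le a-1\}$ is correct, and your ``easy half'' ($\mu = \min A_s$ cannot split, so $(\mu,2)$ is irreducible whenever $\mu + 2s \in \Gamma$) is a valid argument. But the proof stops exactly where the theorem begins. In the complementary case $\mu + 2s \notin \Gamma$ you say you ``would locate a second irreducible element \ldots by an explicit construction, running a case analysis,'' and you yourself flag this step as ``the main obstacle.'' No candidate element is named, no membership in $S_\Gamma^s$ is verified, and no decomposition is excluded. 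That unexecuted case analysis \emph{is} the content of the theorem, so what you have is a correct reformulation plus a plan, not a proof.

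For comparison, the paper resolves the hard direction with a different and much cleaner dichotomy, keyed not to $\min A_s$ but to whether $\frob(\Gamma) - s + d \in \Gamma$. If it is, then $(\frob(\Gamma) - s + d, 2)$ is irreducible: symmetry of $\Gamma$ converts the membership $\frob(\Gamma) + d - n \in \Gamma$ (forced on any summand $(n,1)$) into $n - d \notin \Gamma$, and a rewriting argument among the generators then forces both $n$ and $n+s$ to be multiples of $a$, whence $a \mid s$, contradicting $s \notin \Gamma$. If instead $\frob(\Gamma) - s + d \notin \Gamma$, a lemma (again using symmetry) shows $s - d \equiv am \bmod (ah+kd)$ with $0 \le m \le h-1$; this congruence certifies $(ah+d, 2) \in S_\Gamma^s$, and irreducibility is then almost free: any splitting writes $ah+d$ as a sum of two positive elements of $\Gamma$ smaller than $ah+d$, which must both be multiples of $a$, contradicting $\gcd(a,d)=1$. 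Note that symmetry of $\Gamma$ is the engine of both halves, whereas your framework never invokes it; any completion of your case analysis on residues will almost certainly need it (the Huneke-Wiegand setting is Gorenstein, i.e.\ $\Gamma$ symmetric). One further point your outline omits: the paper's Proposition requires $3 \le k < a$, so the case of at most $3$ generators must be (and is) handled separately, via the complete intersection result of \cite{hwintersection}.
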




\section{Visualizations for locating irreducible elements of $S_\Gamma^s$}
\label{sec:findingirreducibles}


Before giving the proof of Theorem~\ref{t:hwgenarith}, we demonstrate the utility of certain visuals that arose in obtaining this result.  
For a given numerical semigroup $\Gamma$, we may use the \texttt{Sage}~\cite{sage} package \texttt{LeamerMonoid}~\cite{leamersage} to compute, for each $s \notin \Gamma$, the set of all irreducible elements $(n,2)$.  A particularly helpful graphic emerges when we plot a point at $(s,n)$ if $(n,2) \in S_\Gamma^s$ is irreducible; Figures~\ref{f:3gen} and~\ref{f:genarith} each contain two examples.  Thus, the semigroup algebra $\kk[\Gamma]$ satisfies the Huneke-Wiegand conjecture for all 2-generated monomial ideals if, for each positive $s \notin \Gamma$, there exists at least one point $(s,n)$ in that column.  

\begin{figure}
\begin{center}
\includegraphics[width=2.8in]{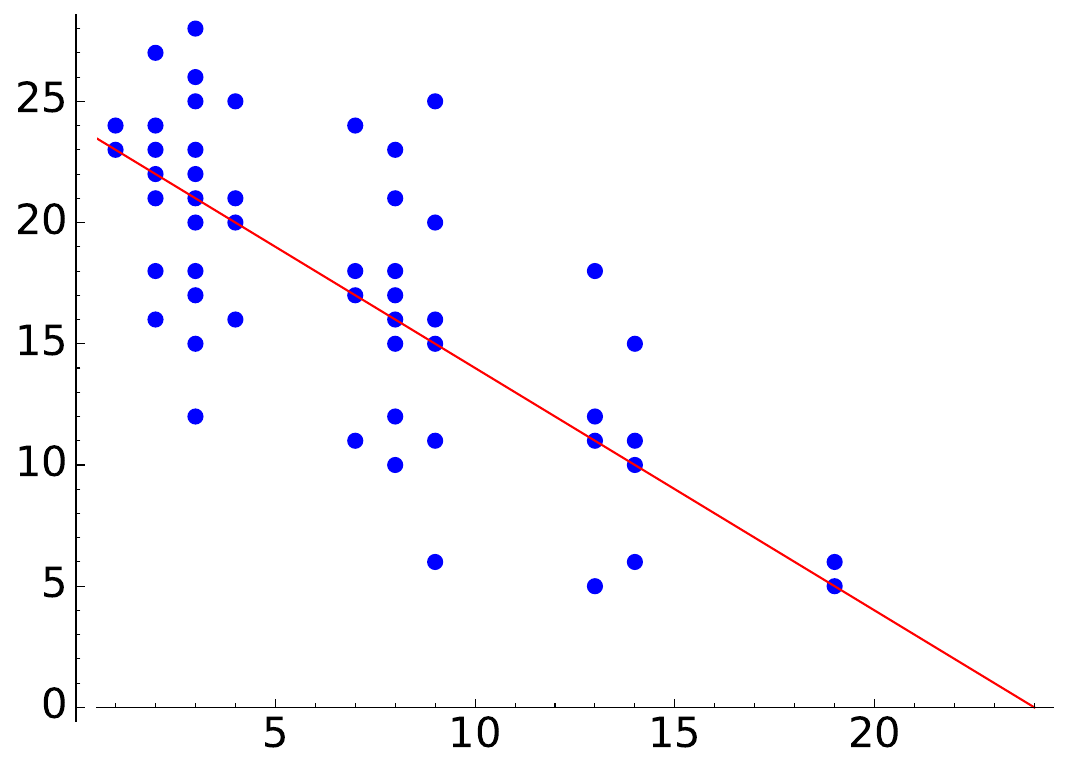}
\hspace{0.2in}
\includegraphics[width=2.8in]{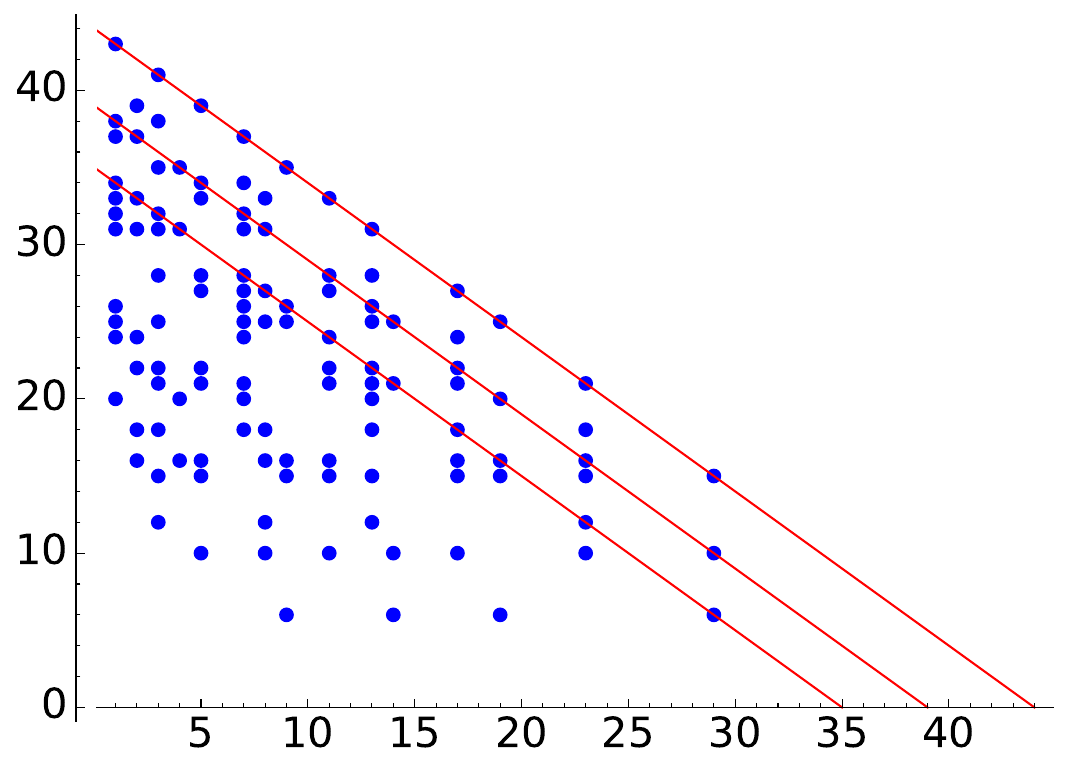}
\end{center}
\caption{The points $(s,n)$ for which $(n,2)$ is irreducible in $S_\Gamma^s$, where $\Gamma = \<5, 6\>$ (left) and $\Gamma = \<6, 10, 15\>$ (right).}
\label{f:3gen}
\end{figure}

Examining several of these graphs reveals a method of finding the requisite irreducible elements.  
For example, if $\Gamma = \<n_1, n_2\>$, such as in the left-hand graphic of Figure~\ref{f:3gen}, then $(\frob(\Gamma) + n_1 - s, 2) \in S_\Gamma^s$ is irreducible for each $s \notin \Gamma$; this is depicted with a diagonal red line defined by the equation $n = \frob(\gamma) + n_1 - s$ that contains a point in every column.



\begin{lemma}\label{l:gdivs}
Fix a symmetric numerical semigroup $\Gamma = \<n_1, \ldots, n_k\>$ with $s \notin \Gamma$.  Fix a generator $n_j$, and let $g = \gcd(\{n_1, \ldots, n_k\} \setminus \{n_j\})$.  We have $(\frob(\Gamma) - s + n_j, 2) \in S_\Gamma^s$, and if this element is reducible in $S_\Gamma^s$, then $g \mid s$.  
\end{lemma}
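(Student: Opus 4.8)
The plan is to verify the membership statement directly and then analyze a hypothetical reduction, in both cases exploiting the elementary ``remove a copy of $n_j$'' principle together with symmetry. Write $F = \frob(\Gamma)$ and set $m = F - s + n_j$; note $s \le F$ (as $s \notin \Gamma$), so $m \ge n_j \ge 1$. To show $(m,2) \in S_\Gamma^s$ I must check $m, m+s, m+2s \in \Gamma$. Since $m+s = F + n_j$ and $m + 2s = F + s + n_j$ both exceed $F$, they lie in $\Gamma$ automatically. For $m$ itself I would invoke symmetry: $m \in \Gamma$ if and only if $F - m = s - n_j \notin \Gamma$, and $s - n_j \notin \Gamma$ since otherwise $s = (s - n_j) + n_j \in \Gamma$ would contradict $s \notin \Gamma$ (and if $s - n_j < 0$ it fails to lie in $\Gamma$ for free). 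Hence $m \in \Gamma$ and $(m,2) \in S_\Gamma^s$.

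For the divisibility, suppose $(m,2)$ is reducible. Because the second coordinates are positive integers summing to $2$, any reduction must take the form $(m,2) = (a,1) + (b,1)$ with $a, b \ge 1$, $a + b = m$, and $a, a+s, b, b+s \in \Gamma$. The crucial step is to rule out $n_j$ from the factorizations of $a+s$ and $b+s$. From $b \in \Gamma$ and symmetry, $F - b \notin \Gamma$; computing $F - b = F - (m - a) = a + s - n_j$ shows $a + s - n_j \notin \Gamma$. Since $a + s \in \Gamma$ while $a + s - n_j \notin \Gamma$, no factorization of $a+s$ can use the generator $n_j$ (removing one copy would exhibit $a+s-n_j \in \Gamma$), so $a+s$ lies in the subsemigroup generated by $\{n_1,\ldots,n_k\}\setminus\{n_j\}$ and therefore $g \mid a+s$. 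The mirror argument, using $a \in \Gamma$ to get $F - a = b + s - n_j \notin \Gamma$, yields $g \mid b+s$.

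Adding these gives $g \mid (a+s) + (b+s) = (a+b) + 2s = F + s + n_j$. To finish I would establish the companion fact $g \mid F + n_j$, which holds for any numerical semigroup: here $F + n_j \in \Gamma$ (it exceeds $F$) while $F \notin \Gamma$, so the identical removal argument shows no factorization of $F + n_j$ uses $n_j$, whence $g \mid F + n_j$. Subtracting then produces $g \mid (F + s + n_j) - (F + n_j) = s$, as required.

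I expect the only real obstacle to be the bookkeeping in the two symmetry applications, namely correctly identifying $F - b = a + s - n_j$ and $F - a = b + s - n_j$ so that the ``remove an $n_j$'' trick applies cleanly to $a+s$ and $b+s$. Once those congruences are in place, the auxiliary divisibility $g \mid F + n_j$ and the concluding subtraction are routine.
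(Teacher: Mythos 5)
Your proof is correct, and its engine is the same as the paper's: decompose a hypothetical reduction of $(\frob(\Gamma)-s+n_j,2)$ into two summands $(a,1)+(b,1)$, observe that each summand's membership in $\Gamma$ forces an $\frob(\Gamma)$-complement (which equals the other summand's data shifted by $-n_j$) out of $\Gamma$, and then apply the ``remove a copy of $n_j$'' principle to land in the subsemigroup generated by the remaining generators, giving divisibility by $g$. Where you diverge is the endgame. Writing $F = \frob(\Gamma)$, the paper extracts both non-memberships about a \emph{single} summand, namely $a - n_j \notin \Gamma$ (which follows from $b+s \in \Gamma$) and $a+s-n_j \notin \Gamma$ (from $b \in \Gamma$); this gives $g \mid a$ and $g \mid a+s$, and one subtraction yields $g \mid s$. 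You instead extract one non-membership from each summand, $a+s-n_j \notin \Gamma$ and $b+s-n_j \notin \Gamma$, obtaining $g \mid a+s$ and $g \mid b+s$; their sum is $F+s+n_j$, so you must prove the auxiliary fact $g \mid F+n_j$ (valid in any numerical semigroup, by the same removal trick applied to $F+n_j$, since $F \notin \Gamma$) before subtracting down to $s$. Both routes are sound; yours costs one extra step but isolates the pleasant symmetry-free fact $g \mid F + n_j$. Note, though, that your own setup already contains the paper's shortcut: the membership $b+s \in \Gamma$ gives $F-(b+s) = a - n_j \notin \Gamma$, hence $g \mid a$, and then $g \mid (a+s) - a = s$ immediately, with no auxiliary divisibility needed.
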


\begin{proof}
Since $\Gamma$ is symmetric, $\frob(\Gamma) - s \in \Gamma$, and since 
$$\frob(\Gamma) + s + n_j > \frob(\Gamma) + n_j > \frob(\Gamma),$$
both $\frob(\Gamma) + n_j$ and $\frob(\Gamma) + s + n_j$ lie in $\Gamma$.  This proves the first claim.  For the second claim, suppose 
$$(\frob(\Gamma) - s + n_j, 2) = (y, 1) + (\frob(\Gamma) - s + n_j - y, 1)$$
with $(y, 1), (\frob(\Gamma) - s + n_j - y, 1) \in S_\Gamma^s$.  Since $y, y + s \in \Gamma$, we conclude $y - n_j, y + s - n_j \notin \Gamma$ since $\Gamma$ is symmetric.  It must be that no expression for $y$ or $y + s$ as a sum of generators involves the generator $n_j$.  In particular, $y, y + s \in \<n_1, \ldots, \hat n_j, \ldots, n_k\>$, meaning $g \mid y$ and $g \mid y + s$, from which we conclude $g \mid s$.  
\end{proof}

Lemma~\ref{l:gdivs} makes quick work of the case $\Gamma = \<n_1,n_2\>$.  Indeed, in addition to verifying $(\frob(\Gamma) - s + n_1, 2) \in S_\Gamma^s$, Lemma~\ref{l:gdivs} implies if this element were reducible, then $n_2 \mid s$, which is impossible since $s \notin \Gamma$.  

For 3-generated numerical semigroups $\Gamma = \<n_1, n_2, n_3\>$, one can see by inspection of the right-hand graphic of Figure~\ref{f:3gen} that, unlike the 2-generated case above, there is no single line that contains a point in every column.  However, the 3 diagonal red lines depicted therein, each of which has the form $n = \frob(\gamma) - s + n_j$ for some~$j$, together contain at least one point in each column.  
These observations yield a relatively straightforward proof, included below, that Conjecture~\ref{conj:hw} holds in the case where $\Gamma$ has at most 3 generators; however, note that this case also follows from~\cite{hwintersection} since any such numerical semigroup is complete intersection.




\begin{prop}\label{p:3gensym}
Given any symmetric numerical semigroup $\Gamma = \<n_1, n_2, n_3\>$ and any $s \notin \Gamma$, the element $(\frob(\Gamma) - s + n_j, 2) \in S_\Gamma^s$ is irreducible for some $j$.
\end{prop}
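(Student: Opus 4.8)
The plan is to first recast reducibility in a more usable form, then use Lemma~\ref{l:gdivs} to collapse the problem to a single residual case, and finally dispatch that case with the complete intersection structure of three-generated symmetric semigroups. Unwinding the decomposition in the proof of Lemma~\ref{l:gdivs}, the element $(\frob(\Gamma)-s+n_j,2)$ is reducible in $S_\Gamma^s$ exactly when there is some $w$ with $w,w+s\in\Gamma$ and $w-n_j,w+s-n_j\notin\Gamma$; since $\Gamma$ is symmetric (so $a\in\Gamma\iff \frob(\Gamma)-a\notin\Gamma$), this is equivalent to $w,w+s\in\Ap(\Gamma,n_j)$. Thus $(\frob(\Gamma)-s+n_j,2)$ is irreducible precisely when $\Ap(\Gamma,n_j)$ contains no two elements differing by exactly $s$. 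This reformulation is the main engine, and it recovers Lemma~\ref{l:gdivs} directly: the elements of $\Ap(\Gamma,n_j)$ all lie in $\langle n_1,\ldots,\hat n_j,\ldots,n_k\rangle$ and so are divisible by $g_j$, whence a difference of $s$ forces $g_j\mid s$.

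Next I reduce the number of cases. Writing $g_j$ for the gcd of the generators other than $n_j$, the integers $g_1,g_2,g_3$ are pairwise coprime: a prime dividing two of them would divide all three generators, contradicting $\gcd(n_1,n_2,n_3)=1$. By Lemma~\ref{l:gdivs}, if some $g_j\nmid s$ then $(\frob(\Gamma)-s+n_j,2)$ is already irreducible and we are done, so I may assume $g_1\mid s$, $g_2\mid s$, and $g_3\mid s$, whence $g_1g_2g_3=\lcm(g_1,g_2,g_3)\mid s$. One cannot stop here by claiming $s\in\Gamma$, since $g_1g_2g_3$ need not lie in $\Gamma$ (for instance $\Gamma=\langle 8,13,18\rangle$ has $g_1g_2g_3=2\notin\Gamma$); this is exactly what makes the remaining case genuine, and it is why the divisibility test alone cannot finish the proof.

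It remains to treat the hard case, where $g_1g_2g_3\mid s$ and $s\notin\Gamma$. Here I would invoke that a three-generated symmetric numerical semigroup is a complete intersection, hence a gluing: after reordering, $\Gamma=\langle da_1,da_2,c\rangle$ with $\gcd(a_1,a_2)=1$, $\gcd(d,c)=1$, $c\in\langle a_1,a_2\rangle$, and $d=\gcd(da_1,da_2)\geq 2$. In the language of the reformulation I expect the ``glued'' generator $c$ to be the one that may fail, so I would aim to show that for one of the remaining generators, say $n_j=da_1$, the set $\Ap(\Gamma,da_1)$ has no two elements at distance $s$. Since $\Ap(\Gamma,da_1)\subseteq\langle da_2,c\rangle$, rescaling by $g_j=\gcd(da_2,c)$ converts this into a statement about the Ap\'ery set of a two-generated numerical semigroup taken with respect to an external modulus, reducing the goal to ruling out a difference of exactly $s/g_j$.

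The hard part will be this last step: controlling the gaps of $\Ap(\Gamma,da_1)$. Because the divisibility test is by design insufficient here, the argument must exploit the finer structure---most naturally the symmetry of $\Ap(\Gamma,da_1)$ about its maximum $\frob(\Gamma)+da_1$, together with the explicit arithmetic of the gluing---to show that a pair of class-minimal elements of $\langle da_2,c\rangle$ at distance $s$ would force $s\in\Gamma$, contrary to hypothesis. Pinning down which of $da_1,da_2$ works, or that at least one of them always does, is where I expect the real work to lie.
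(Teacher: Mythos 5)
Your setup is correct and tracks the paper's proof almost step for step: the Ap\'ery-set reformulation of reducibility (namely, $(\frob(\Gamma)-s+n_j,2)$ is reducible iff $\Ap(\Gamma,n_j)$ contains two elements at distance $s$) is a faithful repackaging of the decomposition used in Lemma~\ref{l:gdivs}; the pairwise coprimality of $g_1,g_2,g_3$ and the resulting reduction to the case $g_1g_2g_3\mid s$ is exactly the paper's first move; and the structure theorem you invoke (a $3$-generated symmetric numerical semigroup is a complete intersection, so after reordering $d=\gcd(n_1,n_2)>1$ and $dn_3=an_1+bn_2$) is the same one the paper cites from \cite[Theorem~9.6]{numerical}. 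Your observation that divisibility alone cannot finish (with the symmetric example $\langle 8,13,18\rangle$, where $g_1g_2g_3=2\notin\Gamma$) is also correct, as is your instinct that the glued generator is the potentially bad one.

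However, the proposal stops exactly where the content of the proposition lies, and what you sketch in its place is not the argument that closes the case. The paper's finish is short and concrete: take the generator $n_1$ with $d=\gcd(n_1,n_2)>1$. If $z,z+s\in\Ap(\Gamma,n_1)$, then each factors using only $n_2,n_3$, and the relation $dn_3=an_1+bn_2$ lets one assume the $n_3$-coefficients lie in $\{0,\ldots,d-1\}$ (a larger coefficient could be rewritten, producing an expression involving $n_1$ and contradicting $z-n_1\notin\Gamma$). Subtracting the two factorizations gives $s=(b_2-a_2)n_2+(b_3-a_3)n_3$ with $|b_3-a_3|<d$; reducing modulo $d$ --- using $d=g_3\mid s$, $d\mid n_2$, and $\gcd(d,n_3)=1$ --- forces $b_3=a_3$, hence $s=(b_2-a_2)n_2\in\Gamma$, contradicting $s\notin\Gamma$. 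Your plan to instead ``exploit the symmetry of $\Ap(\Gamma,da_1)$ about its maximum'' is left entirely undeveloped (you say yourself this is where ``the real work'' lies), and it is not the mechanism that makes the case go through: the decisive facts are the coefficient bound $a_3,b_3<d$ and the congruence modulo $d$, neither of which appears in your sketch. As written, the proposal is an accurate reduction of the problem to its hard case, not a proof of the proposition.
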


\begin{proof}
Let $g_j = \gcd(\{n_1, n_2, n_3\} \setminus \{n_j\})$ for each $j$.  By Lemma~\ref{l:gdivs}, $(\frob(\Gamma) - s + n_j, 2) \in S_\Gamma^s$ for each generator $n_j$, and in order to prove one of these is irreducible in $S_\Gamma^s$, it suffices to assume $\lcm(g_1, g_2, g_3) \mid s$.  

Since $\Gamma$ is symmetric, \cite[Theorem~9.6]{numerical} implies that, after rearranging $n_1, n_2, n_3$ as needed, $d = \gcd(n_1, n_2) > 1$ and 
\begin{equation}\label{swap2}
dn_3 = a n_1 + b n_2 
\end{equation}
for some $a, b \in \ZZ_{\ge 0}$.  We claim $(\frob(\Gamma) - s + n_1, 2) \in S_\Gamma^s$ is irreducible.  Indeed, if this element were reducible, then it could be written as a sum
$$(\frob((\Gamma) - s + n_1, 2) = (\frob(\Gamma) - s + n_1 - z, 1) + (z, 1)$$
of atoms in $S_\Gamma^s$.  In particular, $z, z + s \in \Gamma$ since $(z, 1) \in S_\Gamma^s$, whereas $z - n_1, z + s - n_1 \notin \Gamma$ since $\Gamma$ is symmetric.  As such, any expression of $z$ and $z + s$ as a sum of generators must only involve $n_2$ and $n_3$, meaning
\begin{displaymath}
z = a_2n_2 + a_3n_3 \qquad \text{ and } \qquad z + s = b_2n_2 + b_3n_3
\end{displaymath}
for some $a_2, a_3, b_2, b_3 \in \ZZ_{\ge 0}$.  Moreover, we must have $0 \le a_3, b_3 < d$, as otherwise~\eqref{swap2} would yield an expression involving $n_1$.  Subtracting $z + s$ and $z$, we find 
$$s = (z + s) - z = (b_2 - a_2)n_2 + (b_3 - a_3)n_3$$
and since $d \mid s$, $d \mid n_2$ and $d \nmid n_3$, we conclude $d \mid (b_3 - a_3)$.  However, $|b_3 - a_3| < d$. Thus $b_3 - a_3 = 0$ and $s = (b_2 - a_2)n_2$, a contradiction.
\end{proof}

\section{Numerical monoids generated by generalized arithmetic sequences}
\label{sec:genarith}

We now turn our attention back to numerical semigroups $\Gamma$ generated by generalized arithmetic sequences; Figure~\ref{f:genarith} shows plots of irreducible elements of $S_\Gamma^s$ for two such semigroups.  Like the $3$-generated case, multiple lines are required to find an irreducible element for each $s \in \mathbb Z_{\geq 1} \setminus \Gamma$.  In particular, the combination of the line $n = \frob(\Gamma) + d - s$ and the horizontal line $n = ah + d$ provide the requisite irreducible elements of $S_\Gamma^s$.  These lines manifest in the form of Proposition~\ref{p:genarith}, which we prove after a short lemma.


\begin{lemma}\label{l:genarith}
Suppose $\Gamma = \<a, ah + d, ah + 2d, \ldots, ah + kd\>$ is a symmetric numerical semigroup with $3 \le k < a$ and $\gcd(a,d) = 1$, and fix $s \notin \Gamma$.  
We~have $\frob(\Gamma) - s + d \notin \Gamma$ if and only if for some $m \in \{0, \ldots, h-1\}$, 
$$s - d \equiv am \bmod (ah + kd).$$
\end{lemma}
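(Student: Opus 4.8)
The plan is to first use the symmetry of $\Gamma$ to convert the non-membership statement into a membership statement, and then to translate membership into the stated congruence using an explicit description of the least element of $\Gamma$ in each residue class modulo the multiplicity $a$. For the symmetry reduction, recall that $\Gamma$ symmetric means $x \in \Gamma \iff \frob(\Gamma) - x \notin \Gamma$ for every integer $x$. Applying this with $x = \frob(\Gamma) - s + d$, so that $\frob(\Gamma) - x = s - d$, immediately gives $\frob(\Gamma) - s + d \notin \Gamma \iff s - d \in \Gamma$. Thus it suffices to prove, under the hypothesis $s \notin \Gamma$ (so $s \ge 1$), that $s - d \in \Gamma$ if and only if $s - d \equiv am \pmod{ah+kd}$ for some $m \in \{0,\ldots,h-1\}$.

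The next step is to set $N = ah+kd$ and record a membership criterion. Writing a general element as $z_0 a + \sum_{i=1}^k z_i(ah+id) = a(z_0 + hW) + Qd$ with $W = \sum_i z_i$ and $Q = \sum_i i z_i$, one checks that the realizable pairs are exactly those with $W \le Q \le kW$ (together with $(W,Q)=(0,0)$). Since $\gcd(a,d)=1$, the residue of an element modulo $a$ pins down $Q \bmod a$, and minimizing $a(z_0+hW)+Qd$ shows that the least element $w_J$ of $\Gamma$ in the class $Jd \bmod a$ is attained at $z_0 = 0$, $Q = J$, $W = \lceil J/k\rceil$. Writing $J = k\ell + j$ with $1 \le j \le k$, this yields $w_J = \ell N + (ah+jd)$ for $J \ge 1$ (and $w_0 = 0$), so that $n \in \Gamma \iff n \ge w_{(nd^{-1})\bmod a}$. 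I also note that every positive multiple of $a$ lies in $\Gamma$, so $s \notin \Gamma$ forces $a \nmid s$, i.e. $J := (sd^{-1})\bmod a \ge 1$, and that $(s-d)d^{-1} \equiv J-1 \pmod a$.

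With this in hand the backward direction is short: if $s - d \equiv am \pmod N$ with $m \in \{0,\ldots,h-1\}$, write $s - d = am + tN$; were $t \le -1$ we would have $s - d \le a(h-1) - N = -a-kd$, contradicting $s - d \ge 1 - d > -a-kd$, so $t \ge 0$ and $s-d = am + tN$ is a nonnegative combination of the generators $a$ and $N$, hence lies in $\Gamma$. For the forward direction, assume $s \notin \Gamma$ and $s - d \in \Gamma$; the criterion gives $s < w_J$ and $s - d \ge w_{J-1}$, so $w_{J-1} + d \le s < w_J$ and therefore $w_J - w_{J-1} > d$. Since the formula for $w_J$ gives $w_J - w_{J-1} = d$ when $j \ge 2$ and $w_J - w_{J-1} = ah+d$ when $j = 1$, we must have $j = 1$, whence $w_{J-1} = \ell N$ and $s - d \in [\ell N, \ell N + ah)$. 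As $s - d \equiv (J-1)d = k\ell d \equiv \ell N \pmod a$, I may write $s - d = \ell N + am$ with $0 \le am < ah$, giving $m \le h-1$ and $s - d \equiv am \pmod N$, as required.

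I expect the main obstacle to be the second step: establishing the minimal-element description $w_J = \ell N + (ah+jd)$ and, in particular, the clean dichotomy $w_J - w_{J-1} \in \{d,\, ah+d\}$ that drives the forward argument. The point requiring care is verifying that the minimizing choice $Q = J$, $W = \lceil J/k\rceil$ is legitimate (i.e. satisfies $W \le Q \le kW$) and that the residue bookkeeping $s - d \equiv \ell N \pmod a$ is correct; once these are secured, the interval argument in the forward direction and the elementary size bound in the backward direction close the proof, and neither the bound $k < a$ nor $k \ge 3$ is needed beyond being part of the standing setup.
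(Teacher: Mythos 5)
Your proof is correct, but after the opening step it takes a genuinely different route from the paper's. Both arguments begin identically, using symmetry to reduce the lemma to: $s - d \in \Gamma$ if and only if $s - d \equiv am \bmod (ah+kd)$ for some $m \in \{0,\ldots,h-1\}$. The paper then works purely locally on factorizations: given any expression $s - d = z_0 a + z_1(ah+d) + \cdots + z_k(ah+kd)$, it shows that a positive middle coefficient $z_j$ (for $0 < j < k$) or a coefficient $z_0 \ge h$ would let the spare $d$ be absorbed --- via $(ah+jd) + d = ah+(j+1)d$, respectively $h\cdot a + d = ah+d$ --- producing a factorization of $s$ and contradicting $s \notin \Gamma$; what survives is exactly $s - d = z_0 a + z_k(ah+kd)$ with $0 \le z_0 \le h-1$. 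You instead establish a global structural fact, the explicit formula $w_J = \ell N + (ah+jd)$ (with $J = k\ell + j$, $1 \le j \le k$, $N = ah+kd$) for the least element of $\Gamma$ in the class $Jd \bmod a$ --- i.e.\ the Ap\'ery set of $\Gamma$ with respect to $a$ --- and then deduce the forward direction from the membership criterion together with the gap dichotomy $w_J - w_{J-1} \in \{d,\, ah+d\}$ and an interval argument. Your supporting claims all check out: the realizability condition $W \le Q \le kW$, the optimality of $z_0 = 0$, $Q = J$, $W = \lceil J/k \rceil$ (legitimate since $\lceil J/k \rceil \le J$ for $J \ge 1$), the residue bookkeeping $s - d \equiv \ell N \bmod a$ in the case $j = 1$, and the size bound ruling out $t \le -1$ in the backward direction are all sound. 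The trade-off: the paper's trading argument is shorter and entirely self-contained, while yours costs more work up front but buys more --- a complete membership test for $\Gamma$ (essentially reproving the known Ap\'ery-set description of numerical semigroups generated by generalized arithmetic sequences), and the observation that this lemma needs neither $k \ge 3$ nor $k < a$; as you note, those hypotheses only become essential elsewhere (e.g.\ $k \ge 3$ in part~(b) of Proposition~\ref{p:genarith}, where $ah+3d$ must be a generator).
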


\begin{proof}
First, suppose $s - d \equiv am \bmod (ah + kd)$ for some $m$ as above.  Fixing $l \in \ZZ$ such that $s - d = am + l(ah + kd)$ and noting that $l \ge 0$ since $s > 0$, we can write 
$$\frob(\Gamma) - s + d = \frob(\Gamma) - (d + am + l(ah + kd)) + d = \frob(\Gamma) - (am + l(ah + kd)).$$
Since $\Gamma$ is symmetric and $am + l(ah + kd)\in \Gamma$, this implies $\frob(\Gamma) - s + d \notin \Gamma$.  

Conversely, suppose $\frob(\Gamma) - s + d \notin \Gamma$.  Since $\Gamma$ is symmetric, $s - d \in \Gamma$, so suppose 
$$s - d = z_0a + z_1(ah + d) + \cdots + z_k(ah + kd).$$
We claim  (i) $z_j = 0$ for each $0 < j < k$, and (ii) $0 \le z_0 \le h-1$. 
Indeed, if $z_j > 0$ for $0 < j < k$, then
$$\begin{array}{r@{}c@{}l@{}r@{}l}
s = s - d + d
&{}={}& z_0a + \cdots +{} &    z_j&(ah + jd) + \cdots + z_k(ah + kd) + d \\[0.05in]
&{}={}& z_0a + \cdots +{} &(z_j-1)&(ah + jd) + \cdots + z_k(ah + kd) + (ah + (j+1)d),
\end{array}$$
which is impossible since $s \notin \Gamma$, and if $z_0 \ge h$, then
$$\begin{array}{r@{}c@{}r@{}l}
s = s - d + d
&{}={}&       z_0&a + \cdots + z_k(ah + kd) + d \\[0.05in]
&{}={}& (z_0 - h)&a + \cdots + z_k(ah + kd) + (ah + d),
\end{array}$$
which is again impossible since $s \notin \Gamma$.  Consequently, $s - d = z_0a + z_k(ah + kd)$, thereby completing the proof with $m = z_0$.
\end{proof}

\begin{figure}[t]
\begin{center}
\includegraphics[width=2.8in]{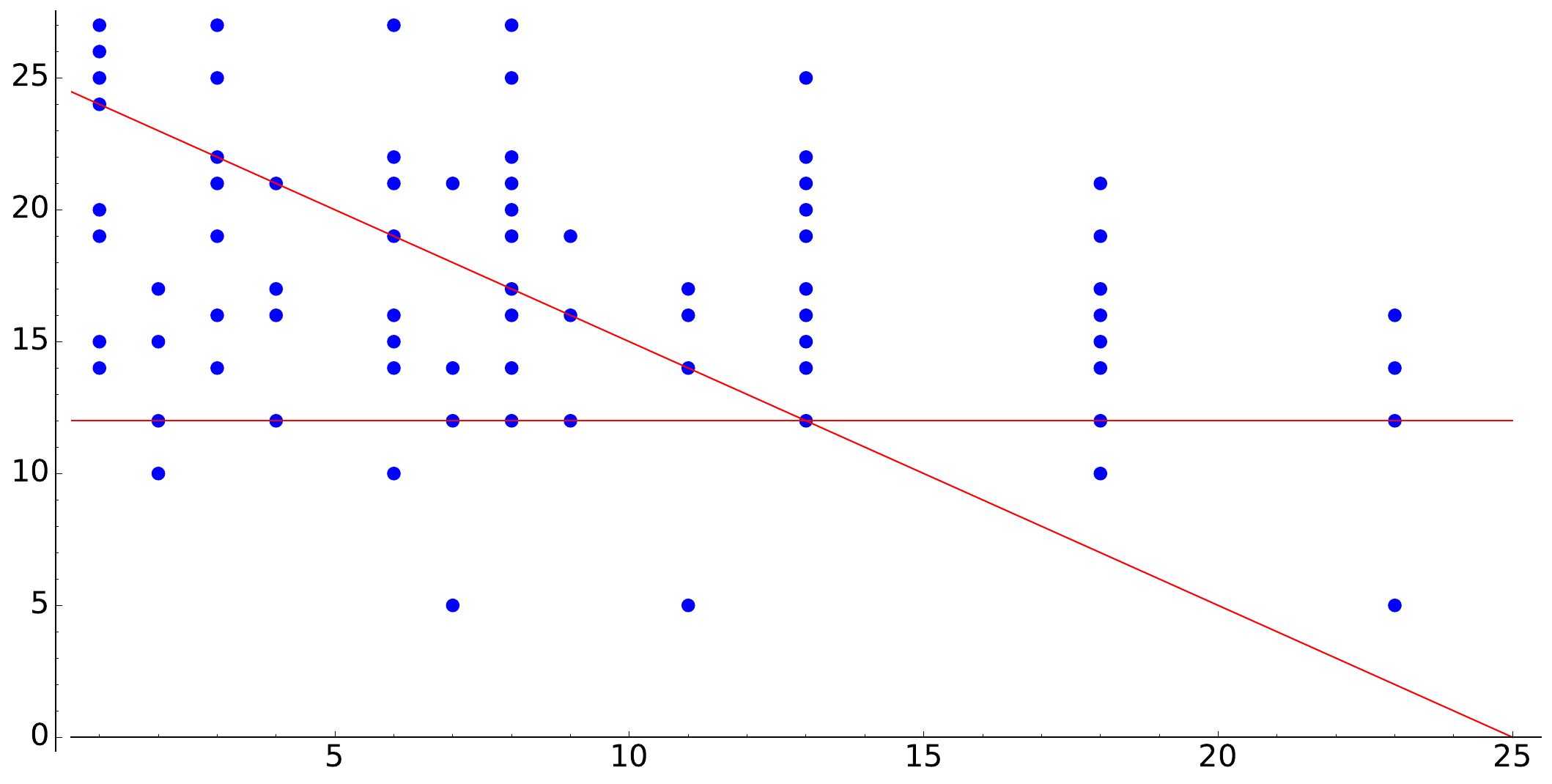}
\hspace{0.2in}
\includegraphics[width=2.8in]{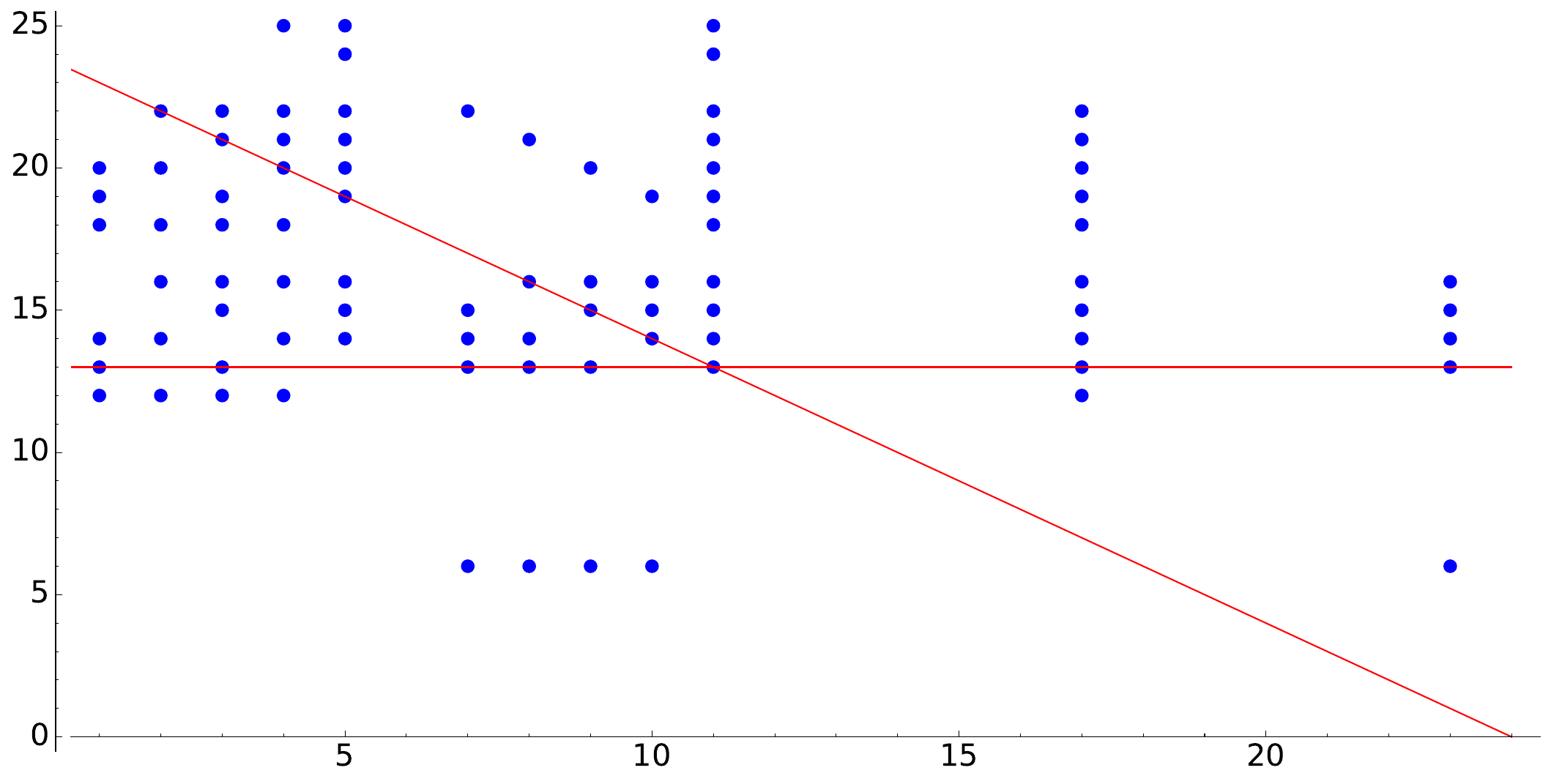}
\end{center}
\caption{The points $(s,n)$ for which $(n,2)$ is irreducible in $S_\Gamma^s$, where $\Gamma = \<5, 12, 14, 16\>$ (left) and $\Gamma = \<6, 13, 14, 15, 16\>$ (right) are each generated by a generalized arithmetic sequence.}
\label{f:genarith}
\end{figure}

\begin{prop}\label{p:genarith}
Suppose $\Gamma = \<a, ah + d, ah + 2d, \ldots, ah + kd\>$ is a symmetric numerical semigroup with $3 \le k < a$ and $\gcd(a,d) = 1$, and fix $s \notin \Gamma$.  
\begin{enumerate}[(a)]
\item 
If $\frob(\Gamma) - s + d \in \Gamma$, then $(\frob(\Gamma) - s + d, 2)$ is irreducible in $S_\Gamma^s$.  

\item 
If $\frob(\Gamma) - s + d \notin \Gamma$, then $(ah + d, 2)$ is irreducible in $S_\Gamma^s$.  

\end{enumerate}
So, the Huneke-Wiegand conjecture holds for any 2-generated monomial ideal in $\kk[\Gamma]$.  
\end{prop}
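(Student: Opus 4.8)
The plan is to treat the two cases (a) and (b) separately, in each case first checking membership in $S_\Gamma^s$ and then irreducibility, and finally to observe that the two cases exhaust all positive $s \notin \Gamma$ so that the Garc\'ia-S\'anchez--Leamer criterion from the introduction applies. Two ingredients would be used throughout. The first is the symmetry of $\Gamma$, which I would use in the bidirectional form $t \in \Gamma \iff \frob(\Gamma) - t \notin \Gamma$ for every $t \in \ZZ$. The second is a pair of structural observations about the generators: (i) every element of $\Gamma$ strictly smaller than $ah+d$ is a multiple of $a$, since using any generator $ah + jd$ with $j \ge 1$ already produces a value at least $ah + d$; and (ii) if $n \in \Gamma$ but $a \nmid n$, then $n - d \in \Gamma$, because some generator $ah + jd$ with $j \ge 1$ must appear in any expression for $n$, and a generator $ah + jd$ with $j \ge 2$ can be traded for $ah + (j-1)d$ while $ah + d$ can be traded for $h$ copies of $a$, in either case lowering the value by exactly $d$.

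For part (a), membership of $(\frob(\Gamma) - s + d, 2)$ in $S_\Gamma^s$ is immediate: $\frob(\Gamma) - s + d \in \Gamma$ by hypothesis, while $\frob(\Gamma) + d$ and $\frob(\Gamma) + s + d$ both exceed $\frob(\Gamma)$ and hence lie in $\Gamma$. For irreducibility I would suppose a splitting $(\frob(\Gamma) - s + d, 2) = (y,1) + (\frob(\Gamma) - s + d - y, 1)$ in $S_\Gamma^s$, so that $y, y+s \in \Gamma$ while $\frob(\Gamma) + d - y \in \Gamma$ and $\frob(\Gamma) - s + d - y \in \Gamma$. Applying symmetry to the latter two memberships yields $y - d \notin \Gamma$ and $y + s - d \notin \Gamma$. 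Combined with $y \in \Gamma$ and $y + s \in \Gamma$, observation (ii) forces $a \mid y$ and $a \mid y + s$, hence $a \mid s$; but then $s$ is a positive multiple of $a$, so $s \in \Gamma$, contradicting $s \notin \Gamma$.

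For part (b), I would first note that symmetry turns the hypothesis $\frob(\Gamma) - s + d \notin \Gamma$ into $s - d \in \Gamma$. Using $k \ge 3$, this gives membership of $(ah + d, 2)$ in $S_\Gamma^s$ via the identities $ah + d + s = (ah + 2d) + (s - d)$ and $ah + d + 2s = (ah + 3d) + 2(s - d)$, each manifestly a sum of elements of $\Gamma$ (alternatively, Lemma~\ref{l:genarith} supplies an explicit expression $s = d + am + l(ah+kd)$ giving the same memberships). Irreducibility is then a short length argument: in any splitting $(ah + d, 2) = (y,1) + (ah + d - y, 1)$ the first coordinates $y$ and $ah + d - y$ are positive elements of $\Gamma$ summing to $ah + d$, hence both strictly smaller than $ah + d$; by observation (i) each is a multiple of $a$, forcing $a \mid d$, which contradicts $\gcd(a,d) = 1$ (note $a > k \ge 3$).

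Finally, since every positive $s \notin \Gamma$ satisfies exactly one of $\frob(\Gamma) - s + d \in \Gamma$ or $\frob(\Gamma) - s + d \notin \Gamma$, each such $s$ admits an irreducible element $(n,2) \in S_\Gamma^s$; by the criterion of Garc\'ia-S\'anchez and Leamer recalled in the introduction, this proves the Huneke-Wiegand conjecture for all 2-generated monomial ideals in $\kk[\Gamma]$. I expect the main obstacle to be the irreducibility argument in part (a): extracting $a \mid y$ and $a \mid y + s$ requires combining symmetry with observation (ii), and one must be careful to apply (ii) to the correct translates $y - d$ and $y + s - d$ rather than to $y$ or $y+s$ directly. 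The analogous step in part (b), by contrast, reduces to the elementary fact that the generator $ah + d$ cannot be written as a sum of two positive elements of $\Gamma$.
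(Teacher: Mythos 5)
Your proof is correct and follows essentially the same route as the paper's: the same membership checks, the same symmetry-plus-trading argument (your observation (ii)) forcing $a \mid y$ and $a \mid y+s$ in part (a), and the same ``elements of $\Gamma$ below $ah+d$ are multiples of $a$'' argument (your observation (i)) in part (b). The only deviation is that for membership in part (b) you obtain $s-d \in \Gamma$ directly from symmetry instead of invoking Lemma~\ref{l:genarith}'s decomposition $s-d = am + l(ah+kd)$; this is a mild streamlining rather than a different approach, since the paper's identities $(ah+2d)+am+l(ah+kd)$ and $(ah+3d)+2am+2l(ah+kd)$ use that decomposition only as a witness that $s-d \in \Gamma$.
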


\begin{proof} 
For part~(a), suppose $\frob(\Gamma) - s + d \in \Gamma$.  Since 
$$\frob(\Gamma) + s + d > \frob(\Gamma) + d > \frob(\Gamma),$$
we also have $\frob(\Gamma) + d \in \Gamma$ and $\frob(\Gamma) + d + s \in \Gamma$.  This means $(\frob(\Gamma) - s + d, 2) \in S^s_\Gamma$.

Now, by way of contradiction, suppose $(\frob(\Gamma) - s + d, 2)$ is reducible, so that
$$(\frob(\Gamma) - s + d, 2) = (\frob(\Gamma) - s + d - n, 1) + (n, 1)$$
for some $(n, 1), (\frob(\Gamma) - s + d - n, 1) \in S_\Gamma^s$.  In particular, this means $z$ and $\frob(\Gamma) + d - n$ both lie in $\Gamma$, and since $\Gamma$ is symmetric, $n - d \notin \Gamma$.  We claim $n = z_0a$ for some $z_0 \in \ZZ_{\ge 0}$.  Indeed, if $n = z_0a + \cdots + z_k(ah + kd)$ with $z_1 > 0$, then
\begin{displaymath}
	n - d = (z_0 + h)a + (z_2 - 1)(ah + d) + \cdots + z_k(ah + kd),
\end{displaymath}
and if $z_j > 0$ for some $j > 1$, then
\begin{displaymath}
	n - d = z_0a + \cdots + (z_{j-1} + 1)(ah + (j-1)d) + (z_j - 1)(ah + jd) + \cdots + z_k(ah + kd),
\end{displaymath}
both of which are impossible since $n - d \notin \Gamma$.  By similar reasoning, $n + s \in \Gamma$ and $n + s - d \notin \Gamma$, so $n + s = z_0'a$ for some $z_0' \in \ZZ_{\ge 0}$.  This yields $s = (n + s) - n = (z_0' - z_0)a$, which is impossible since $s \notin \Gamma$.  As such, we conclude $(\frob(\Gamma) - s + d, 2)$ is irreducible in $S_\Gamma^s$, thereby proving part~(a).  

For part~(b), suppose $\frob(\Gamma) - s + d \notin \Gamma$.  By Lemma~\ref{l:genarith}, $s - d \equiv am \bmod (ah + kd)$ for some $m \in \{0, \ldots, h-1\}$, so let $l \in \ZZ$ with $s - d = am + l(ah + kd)$.  Since $ah + d \in \Gamma$ and $k \ge 3$, we have
$$\begin{array}{r@{}c@{}l}
ah + d + s &{}={}& (ah + 2d) + \phantom{2}am + \phantom{2}l(ah + kd) \in \Gamma \text{ and } \\[0.05in]
ah + d + 2s &{}={}& (ah + 3d) + 2am + 2l(ah + kd) \in \Gamma,
\end{array}$$
meaning $(ah + d, 2) \in S_\Gamma^s$.
Lastly, suppose by way of contradiction that $(ah + d, 2)$ is reducible in $S_\Gamma^s$, so that
$$(ah + d, 2) = (ah + d - n, 1) + (n, 1)$$
for some $(n, 1), (ah + d - n, 1) \in S_\Gamma^s$.  This means $n$ and $ah + d - n$ are both in $\Gamma$, but since both are less than $ah + d$, there exists $z_0, z_0' \in \ZZ_{\ge 0}$ such that $n = z_0a$ and $ah + d - n = z_0'a$.  This implies $ah + d = (z_0 + z_0')a$, which is impossible since $\gcd(a,d) = 1$.  This completes the proof.  
\end{proof}

\begin{proof}[Proof of Theorem~\ref{t:hwgenarith}]
If $\Gamma$ has at most 3 generators, then $\Gamma$ is complete intersection by~\cite[Corollary~10.5]{numerical}, so apply \cite[Corollary~22]{hwintersection}.  Otherwise, apply Proposition~\ref{p:genarith}.  
\end{proof}


\end{document}